\theoremstyle{plain}
\newtheorem{defin}{Definition}
\newtheorem{theorem}{Theorem}
\newtheorem{lemma}{Lemma}
\newcommand{\R}{\mathbb{R}}
\newcommand{\Z}{\mathbb{Z}}
\title{The Virtually Cyclic Classifying Space \linebreak of the Heisenberg Group}
\author{Andrew Manion, Lisa Pham, and Jonathan Poelhuis\thanks{The authors were funded by the NSF (DMS 03-54132) and the University of Notre Dame.  Address questions and comments to jpoelhui@nd.edu, amanion1@nd.edu, or lisa.pham@trincoll.edu. The results presented in this paper were first discovered in a more general setting by Wolfgang L{\"u}ck and Michael Weiermann; see \cite{Lueck2} for many results on the dimension of virtually cyclic classifying spaces. The authors of this paper discovered the particular result about the Heisenberg group independently during the summer of 2007. }}
\date{September 3, 2007}
\begin{document}
\maketitle

\begin{abstract}
We are interested in the relationship between the virtual cohomological dimension (or $vcd$) of a discrete group $\Gamma$ and the smallest possible dimension of a model for $B_{vc}\Gamma$.  Here, $B_{vc}\Gamma$ denotes the classifying space of $\Gamma$ relative to the family $vc$ of virtually cyclic subgroups of $\Gamma$.  In this paper we construct a model for $B_{vc}\Gamma_3$ of the Heisenberg group $\Gamma_3$.  This model has dimension $vcd(\Gamma_3)=3$.  We also prove that there exists no model of dimension less than 3. 
\end{abstract}

\section[Section 1(Introduction)]{Introduction. Statement of Results.}

Let $\Gamma$ be a discrete group.  A \emph{family} of subgroups of $\Gamma$ is a nonempty set $\mathscr F$ of subgroups of $\Gamma$ such that if $H \in \mathscr F$, then every subgroup and every conjugate of $H$ is also in $\mathscr F$.

\begin{defin}\emph{($\mathscr F$-universal $\Gamma$-space)} Let $\mathscr F$ be a family of subgroups of $\Gamma$.  Let $X$ be a $\Gamma$-CW-complex.  We say $X$ is an \emph{$\mathscr F$-universal $\Gamma$-space}, denoted $E_{\mathscr F}\Gamma$, if, for each subgroup $H \subset \Gamma$, \[X^H=\begin{cases}\text{contractible}&\text{if $H \in \mathscr F$}\\\emptyset& \text{if $H \notin \mathscr F$.}
\end{cases}\] 
\end{defin}
The orbit space $E_{\mathscr F}\Gamma/\Gamma$ is called the \emph{classifying space of $\Gamma$ relative to $\mathscr F$} and denoted $B_{\mathscr F}\Gamma$.  In recent years two families have appeared repeatedly and significantly in geometric topology.  These are $fin$ and $vc$.  Specifically, $fin$ denotes the family of finite subgroups of $\Gamma$, and $vc$ denotes the family of virtually cyclic subgroups of $\Gamma$.
(A group is virtually cyclic if it contains a cyclic subgroup of finite index.)

We are interested in the relationship between the virtual cohomological dimension (or $vcd$) of a discrete group $\Gamma$ and the smallest possible dimension of a model for $B_{vc}\Gamma$.  One says $vcd(\Gamma) \leq n$ if $\Gamma$ has a subgroup of finite index whose cohomological dimension $cd$ is less than or equal to $n$.  Recall $\Gamma$ has \emph{cohomological dimension} $\le n$ if every $\mathbb Z \Gamma$-module has a projective resolution of length less than or equal to $n$.  If $\Gamma$ is torsion-free, $cd(\Gamma)=vcd(\Gamma)$.

Work done by Eilenberg and Ganea first motivated interest in the dimension of the classifying space of a group $\Gamma$.   Eilenberg and Ganea \cite{Eilenberg} showed that for any group $\Gamma$ with $cd(\Gamma)=n$ there exists a model for $B\Gamma$ with dimension $n$ if $n \neq 2$.  Connolly and Kozniewski \nolinebreak\cite{Kozniewski} similarly showed that for a general class of discrete groups $\Gamma$ there exists a model for $B_{fin}\Gamma$ with dimension $n$ if $n \neq 2$.  Brady, Leary, and Nucinkis \cite{Brady}, however, showed that there exists a discrete group $\Gamma$ with $vcd(\Gamma)=n$ such that any model for $B_{fin}\Gamma$ must necessarily have dimension $n+1$ or greater.     

Our work focuses on the Heisenberg group, denoted $\Gamma_3$:   \[\Gamma_3=\left\{ \left.\left( \begin{array}{ccc}
1 & a & c \\
0 & 1 & b \\
0 & 0 & 1 \end{array} \right) \right\vert a,b,c \in \mathbb{Z} \right\}.\]  We are interested in the smallest dimension of a model for $B_{vc}\Gamma_3$.  Since $\Gamma_3$ is torsion-free, any virtually cyclic subgroup of $\Gamma_3$ is cyclic.    

Farrell and Jones \cite{Farrell} constructed a model for $B_{vc}\Gamma$ for any discrete subgroup $\Gamma$ of a linear group.  Their model has dimension $vcd(\Gamma)+1$.  Connolly, Fehrman, and Hartglass \cite{Hartglass} dealt with the case in which $\Gamma$ is a crystallographic group.  They showed that any model for $B_{vc}\Gamma$ must necessarily have dimension greater than or equal to $vcd(\Gamma)+1$, and they constructed a new model that realized this limit. 

Thus, for a discrete group $\Gamma$ such that $vcd(\Gamma)=n$, the results above suggest that a model for $B_{vc}\Gamma$ would need to have dimension $n+1$ or greater.

However, this is not the case.  Recall that $cd(\Gamma_3)=3$.  We prove  

\begin{theorem}\label{Main} There exists a model for $B_{vc}\Gamma_3$ of dimension 3.  There does not exist a model for $B_{vc}\Gamma_3$ of dimension less than 3.
\end{theorem}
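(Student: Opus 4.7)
Suppose $\tilde Y$ is a model for $E_{vc}\Gamma_3$ of dimension $d$. Pick a rank-two abelian subgroup $H \le \Gamma_3$, for example the centralizer of any non-central element, so that $H\cong \mathbb Z^2$. For any $K\le H$: if $K$ is cyclic, then $K\in vc(\Gamma_3)$ and $\tilde Y^K$ is contractible; if $K$ is non-cyclic, then $K$ is a rank-two subgroup of $H$, hence not virtually cyclic, and $\tilde Y^K=\emptyset$. Therefore the restricted $H$-CW-complex $\tilde Y$ is a model for $E_{vc}H=E_{vc}\mathbb Z^2$, and $\tilde Y/H$ is a model for $B_{vc}\mathbb Z^2$ of dimension at most $d$. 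Since $\mathbb Z^2$ is a crystallographic group with $vcd(\mathbb Z^2)=2$, the Connolly--Fehrman--Hartglass result \cite{Hartglass} forces $\dim B_{vc}\mathbb Z^2 \ge 3$, so $d\ge 3$.

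\textbf{Upper bound.} I build a three-dimensional $\Gamma_3$-CW-complex $X$ and verify the $E_{vc}$ fixed-point conditions. First I classify the maximal cyclic subgroups of $\Gamma_3$: the center $Z$, and for each primitive $v\in \Gamma_3/Z\cong\mathbb Z^2$ a conjugacy class $[C_v]$ generated by a primitive lift of $v$. A direct commutator calculation shows that $N(C_v)$ equals the centralizer of $C_v$, which is rank-two abelian, while $N(Z)=\Gamma_3$. I take as starting point $E_{fin}\Gamma_3=\R^3$, realized as the Heisenberg Lie group with $\Gamma_3$ acting freely by left translation. To this I adjoin, $\Gamma_3$-equivariantly, (i) a ``central stratum'' $\R^2$ on which $\Gamma_3$ acts through the abelianization $\Gamma_3\twoheadrightarrow\mathbb Z^2$, providing the required contractible $Z$-fixed set; and (ii) for each conjugacy class $[C_v]$, an ``axis stratum'' $\Gamma_3\times_{N(C_v)}\R$ carrying translation by $N(C_v)/C_v\cong\Z$, providing contractible $C_v$-fixed lines. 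I glue the strata following the pushout template of L\"uck--Weiermann \cite{Lueck2}, but performing each pushout along a strict cofibration rather than a homotopy pushout so that no extra dimension is introduced. A fixed-point check then shows $X^H$ is contractible when $H$ is cyclic and empty otherwise, and cell counting yields $\dim X=3$.

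\textbf{Main obstacle.} The delicate step is the upper bound. The straightforward L\"uck--Weiermann double-mapping-cylinder pushout of a three-dimensional $E_{fin}\Gamma_3$ into a lower-dimensional central stratum would produce a four-dimensional complex, matching the Farrell--Jones bound of $vcd+1$; to reach dimension $3$ one must realize the central stratum as a genuine subcomplex of an appropriately enlarged $E_{fin}\Gamma_3$, and similarly for each axis. Carrying out this strict gluing $\Gamma_3$-equivariantly, simultaneously for all conjugacy classes of maximal cyclic subgroups, while preserving $X^H=\emptyset$ for $H$ non-cyclic, is where the technical content of the construction lies.
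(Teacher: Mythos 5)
Your lower-bound argument is correct and is a genuinely different route from the paper's. You restrict a putative model for $E_{vc}\Gamma_3$ to a subgroup $H\cong\Z^2$ (the restriction is an $H$-CW-complex of the same dimension, and since a subgroup of $H$ is virtually cyclic in $\Gamma_3$ if and only if it is virtually cyclic in $H$, the restriction is a model for $E_{vc}\Z^2$), and then quote the crystallographic-group lower bound of Connolly--Fehrman--Hartglass \cite{Hartglass} for $\Z^2$. The paper instead works inside its explicit orbit space: it identifies $Cyl([f_\alpha],[g_\alpha])\cong S^1\ast S^1\cong S^3$ as a subcomplex of $B$ and concludes $H_3(B)\neq 0$. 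The two arguments locate the same obstruction (a rank-two abelian subgroup $N(H_\alpha)\cong Z\times H_\alpha$), but yours is a clean formal reduction depending on an external theorem, while the paper's is self-contained. Either is acceptable.

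\textbf{Upper bound.} Here there is a genuine gap: you have identified the correct strata (the central stratum $\R^2$ with the $\Gamma_3/Z$-action, and the axis strata $\Gamma_3\times_{N(C_v)}\R$), but the actual construction --- which is the entire content of the theorem --- is deferred. The sentence ``performing each pushout along a strict cofibration rather than a homotopy pushout so that no extra dimension is introduced'' is not a construction; as you yourself note in your ``Main obstacle'' paragraph, carrying out this strict gluing equivariantly and verifying the fixed-point conditions is exactly the hard step, and you do not do it. Moreover, the specific strategy you propose (strictifying the L\"uck--Weiermann pushout over $E_{fin}\Gamma_3=\R^3$) is awkward: the central stratum has isotropy $Z$ at every point, so it can never literally be a subcomplex of a free $\Gamma_3$-complex, and ``appropriately enlarging'' $E_{fin}\Gamma_3$ to contain it is not explained.

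The paper avoids this difficulty entirely by never using the three-dimensional $E_{fin}\Gamma_3$ as the hub of the construction. Instead it takes $V=\R^2$ (a $\mathscr Z$-universal $\Gamma_3$-space, where $\mathscr Z$ is the family of subgroups of the center), sets $\tilde W_\alpha=\R$ with $N(H_\alpha)/H_\alpha$ acting by translation, and crucially takes the gluing piece to be $\tilde U_\alpha=\tilde V_\alpha\times\tilde W_\alpha\cong\R^2$, so that $U=\coprod_\alpha\Gamma_3\times_{N(H_\alpha)}\tilde U_\alpha$ is only two-dimensional. The double mapping cylinder $E=Cyl(f,g)$ of $f:U\to V$ and $g:U\to W$ then has dimension $2+1=3$; contractibility of $E$ follows because each $g_\alpha$ is a homotopy equivalence, so $E$ deformation retracts to $V\simeq\ast$, and the free points of $E$ are simply the interior cylinder points, with no separate $E_{fin}\Gamma_3$ needed. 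To complete your proof you would either have to carry out your strict-gluing program in full detail or switch to a construction of this kind; as written, the upper bound is not established.
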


\section[Section 2]{Basic Notions.}

Let $\Gamma$ be a discrete group.  Recall that a \emph{$\Gamma$-CW-complex} is a CW-complex $X$ which is also a $\Gamma$-space in such a way that each $\gamma \in \Gamma$ acts cellularly and fixes pointwise each cell which it stabilizes.

Let $H \subset \Gamma$ be a subgroup.  Let $A$ be an $H$-space.  Recall that the \emph{$\Gamma$-space induced from $A$} is \[\Gamma \times _H A =(\Gamma \times A)/\sim,\] where $\sim$ is the following equivalence relation: for all $\gamma$ and $\gamma' \in \Gamma$, and for all $a$ and $a' \in A$, $(\gamma,a) \sim (\gamma',a')$ if there exists an $h \in H$ such that $\gamma'=\gamma h$ and $a'=h^{-1}a$.

The isotropy group, $\Gamma_p$, of a point $p=[\gamma,a]\in \Gamma \times_H A$ is \[\Gamma_p=\gamma H_a \gamma^{-1}.\]  Therefore if $K$ is a subgroup of $\Gamma$, we see that \[(\Gamma \times _H A)^K =\{[\gamma,a] \in \Gamma\times_H A \vert K \subset \gamma H_a \gamma^{-1}\}.\]  Note that this set is empty if $K$ is not conjugate to a subgroup of $H$.  
  
Let $C$ be a complete set of left coset representatives of $H$ in $\Gamma$.  The quotient map $\pi: \Gamma \times A \to \Gamma \times_H A$ restricts to a homeomorphism \[C \times A \to \Gamma \times_H A.\]  From this we see that \[(\Gamma \times_H A)^K\cong\{(\gamma_i,a) \in C \times A \vert K \subset \gamma_i H_a \gamma_i^{-1}\}.\] 

Our model for the $vc$-universal $\Gamma_3$-space is a double mapping cylinder.  We denote the mapping cylinder of a continuous function $f: X \to Y$ as $Cyl(f)$.  If $g:X \to Z$ is also a continuous map, the double mapping cylinder $Cyl(f,g)$ is the quotient space obtained from $Cyl(f) \amalg Cyl(g)$ by identifying the copies of $X$ in each mapping cylinder with each other.

$Cyl(f)$ has $Y$ as a deformation retract.  However, if $f$ is a homotopy equivalence, then $X$ is also a deformation retract of $Cyl(f)$ (see Hatcher \cite{Hatcher}.)  It follows that $Cyl(f,g)$ has $Z$ as a deformation retract. 

\section[Section 3]{Construction of a Three Dimensional $vc$-universal $\Gamma_3$~-space.}

We denote the center of $\Gamma_3$ as $Z$.  As can be computed, \[Z=\left\{ \left.\left( \begin{array}{ccc}
1 & 0 & c \\
0 & 1 & 0 \\
0 & 0 & 1 \end{array} \right) \right\vert c \in \mathbb{Z} \right\}.\]
Let $N(H)=\{\gamma \in \Gamma_3 | \gamma H \gamma^{-1}=H\}$ be the normalizer of a subgroup $H \subset \Gamma_3$. 

Let $\mathscr Z=\{\text{$Z$ and its subgroups}\}$.  Let $A$ be the set of conjugacy classes of maximal cyclic subgroups of $\Gamma_3$ other than $Z$.  For each conjugacy class $\alpha \in A$, choose a representative $H_{\alpha}$.  A computation shows that $N(H_\alpha)=ZH_\alpha \cong Z \times H_\alpha$. Let $\mathscr F_\alpha= \{H_\alpha \,\text{and its subgroups}\}$. 

We will choose a universal $N(H_\alpha)$-space $\tilde{U_\alpha}$, a $\mathscr Z$-universal $N(H_\alpha)$-space $\tilde{V_\alpha}$, and an $\mathscr F_\alpha$~-universal $N(H_\alpha)$-space $\tilde{W_\alpha}$ for each $\alpha \in A$.  We will also choose a $\mathscr Z$-universal $\Gamma_3$-space $V$ such that $V \supset \tilde{V_\alpha}$ for every $\alpha \in A$.  We will define \[U_\alpha=\Gamma_3 \times_{N(H_\alpha)} \tilde{U_\alpha} \, \text{and} \, W_\alpha=\Gamma_3 \times_{N(H_\alpha)} \tilde{W_\alpha}.\] Finally, we will let \[U=\displaystyle\coprod_{\alpha \in A} U_\alpha\] and \[W = \displaystyle\coprod_{\alpha \in A} W_\alpha.\]

We will then choose $N(H_\alpha)$-maps \[f_\alpha:\tilde{U_\alpha} \to \tilde{V_\alpha}\] and \[g_\alpha:\tilde{U_\alpha} \to \tilde{W_\alpha}\] for each $\alpha \in A$. We will induce $\Gamma_3$-maps \[F_\alpha:U_\alpha \to V\] and \[G_\alpha:U_\alpha \to W_\alpha\] from $\iota_\alpha \circ f_\alpha$ and $g_\alpha$ respectively, where $\iota_\alpha$ is the inclusion $\tilde{V_\alpha} \hookrightarrow V$. Finally, we will define \[f=\displaystyle\bigcup_{\alpha \in A}F_\alpha:U \to V\] and \[g=\displaystyle\coprod_{\alpha \in A}G_\alpha:U \to W.\]  We will then define a double mapping cylinder \[E=Cyl(f,g).\] The space $E$ will be a three-dimensional $vc$-universal $\Gamma_3$-space.

It remains to choose the spaces $\tilde{U}_\alpha$, $\tilde{V}_\alpha$, $\tilde{W}_\alpha$, and $V$, as well as the maps $f_\alpha$ and $g_\alpha$, for each $\alpha \in A$.  Let $V$ be $\R^2$ with the following action: \[\gamma \cdot (x,y)=(x+a,y+b) \,\text{for}\, \gamma=\left[\begin{smallmatrix}1&a&c\\ 0&1&b \\ 0&0&1 \end{smallmatrix}\right] \in \Gamma_3.\]  Note that $\R^2$ with this action is indeed $\mathscr{Z}$-universal. Now, for each $\alpha$, $H_\alpha$ stabilizes precisely one line through the origin in $V$, and thus $N(H_\alpha) = ZH_\alpha$ stabilizes the same line. Define $\tilde{V}_\alpha$ to be this line (i.e. a copy of $\mathbb R$) with the $N(H_\alpha)$ action restricted from the $\Gamma_3$ action on $V$. Also, define $\tilde{W}_\alpha$ to be $\R$; let $N(H_\alpha) / H_\alpha \cong \Z$ act on $\tilde{W}_\alpha$ by translations. Finally, let $\tilde{U_\alpha}$ be $\tilde{V}_\alpha \times \tilde{W}_\alpha$ with the diagonal action. These spaces are all universal relative to the required families.

For each $\alpha \in A$, the projection homomorphisms of $N(H_\alpha) \cong Z \times H_\alpha$ onto its coordinates induce $N(H_\alpha)$-maps $f_\alpha:\tilde{U}_\alpha \to \tilde{V}_\alpha$ and $g_\alpha:\tilde{U}_\alpha \to \tilde{W}_\alpha$ respectively, since each $\tilde{V}_\alpha$ is a universal $N(H_\alpha) / Z$-space and each $\tilde{W}_\alpha$ is a universal $N(H_\alpha) / H_\alpha$-space.

\section[Section 4]{Proof that $E$ is $vc$-universal.}

\begin{lemma} $E=Cyl(f,g)$ above is $vc$-universal.
\end{lemma}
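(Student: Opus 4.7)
The plan is to verify the fixed-point criterion: $E^K$ should be contractible when $K\le\Gamma_3$ is cyclic (which, since $\Gamma_3$ is torsion-free, coincides with virtually cyclic) and empty otherwise. The opening reduction is that taking $K$-fixed points commutes with the double mapping cylinder, so
\[ E^K \;=\; Cyl\bigl(f^K\colon U^K\to V^K,\;g^K\colon U^K\to W^K\bigr). \]
I will therefore compute each of $U^K$, $V^K$, $W^K$ separately, applying the coset-formula for $(\Gamma_3\times_{N(H_\alpha)}\wt X)^K$ from Section~2, and then reassemble.

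The key algebraic input is a centralizer calculation in $\Gamma_3$: every noncentral $g\in\Gamma_3$ has $C_{\Gamma_3}(g)=\langle g,Z\rangle\cong\Z^2$. From this I will extract two facts I will use throughout. (i) $Z\cap \gamma H_\alpha\gamma^{-1}=\{e\}$ for every $\alpha\in A$ and $\gamma\in\Gamma_3$, because a nontrivial element of such an intersection would be central yet lie on a primitive line of $\langle h,Z\rangle$ through a noncentral $h$, which is impossible. (ii) Every nontrivial cyclic $K$ not contained in $Z$ has a \emph{unique} maximal cyclic overgroup, which is a unique conjugate $\gamma_0 H_\beta\gamma_0^{-1}$ of a unique $H_\beta$, with $\gamma_0$ well-defined modulo $N(H_\beta)$. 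This is because any cyclic overgroup of $K$ commutes with a generator and therefore lies in a single copy of $\Z^2$, where maximal cyclic subgroups correspond to primitive lines.

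I then case-split on $K$. If $K$ is not cyclic, then $K\not\subseteq Z$ gives $V^K=\emptyset$, $K$ does not embed in any cyclic $H_\alpha$ so $W^K=\emptyset$, and freeness of each $\wt U_\alpha$ gives $U^K=\emptyset$, so $E^K=\emptyset$. If $K=\{e\}$, I observe that each $g_\alpha\colon \wt V_\alpha\times \wt W_\alpha\to \wt W_\alpha$ is a projection and hence a homotopy equivalence; induction preserves this on each component, so $g\colon U\to W$ is a non-equivariant homotopy equivalence, and the mapping cylinder remark of Section~2 then shows that $E$ deformation retracts onto $V=\R^2$, which is contractible. If $K$ is nontrivial with $K\subseteq Z$, then $V^K=V$ while (i) together with freeness gives $W^K=U^K=\emptyset$, so $E^K=V$. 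Finally, if $K$ is nontrivial cyclic with $K\not\subseteq Z$, then $V^K=U^K=\emptyset$; by (ii) there is a unique $\beta\in A$ and a unique coset $\gamma_0 N(H_\beta)$ contributing to $W^K$, and the coset formula identifies $W^K$ homeomorphically with $\wt W_\beta^{\gamma_0^{-1}K\gamma_0}$, which is contractible by $\mathscr F_\beta$-universality of $\wt W_\beta$. In both of the last two cases the cylinder $Cyl(f^K,g^K)$ collapses to the single nonempty factor.

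The main obstacle is the last case. The uniqueness statement (ii) is essential: without it $W^K$ could receive contributions from many cosets and would not in general be contractible (or even connected). Once the centralizer computation is in place, (i) and (ii) follow cleanly, and the remaining work is a bookkeeping exercise with the coset formula and the deformation-retract property of mapping cylinders.
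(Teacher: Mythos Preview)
Your proposal is correct and follows essentially the same route as the paper: the same four-way case split on $K$, the same identification of $E^K$ with $V$, with a single coset-slice of $W_\beta$, or with $\emptyset$ in the nontrivial cases, and the same deformation retraction of $E$ onto $V$ via the homotopy equivalence $g$ when $K=\{e\}$. Your organization via $E^K=Cyl(f^K,g^K)$ and your explicit justification of the centralizer facts (i) and (ii) add rigor where the paper simply asserts ``$K$ must be contained in exactly one maximal cyclic subgroup $\gamma H_\alpha\gamma^{-1}$,'' but the underlying argument is the same.
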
 

\begin{proof}  To prove that $Cyl(f,g)$ is a $vc$-universal space for the Heisenberg group, we first calculate the isotropy group of each point $p \in E$.  We then use these groups $(\Gamma_3)_p$ to find the fixed set of each subgroup $K \subset \Gamma_3$.\\

Case 1: Let $p=[\gamma,a] \in W_\alpha$ for some $\alpha \in A$.  From \S 2, we know \linebreak $(\Gamma_3)_p=\gamma (N(H_\alpha))_a \gamma^{-1}=\gamma H_\alpha \gamma^{-1}$. \\ 

Case 2: Let $p\in V$.  Note that $V$ is a universal $\Gamma_3/Z$-space.  Thus, we conclude that $(\Gamma_3)_p=Z$.\\

Case 3: Let $p \in E \smallsetminus (V \cup W)$.  Then $(\Gamma_3)_p=\{1\}$.\\

We now find the fixed set $E^K$ for each subgroup $K \subset \Gamma_3$.  Recall that any virtually cyclic subgroup of $\Gamma_3$ is cyclic.\\

Case A: Suppose $K \in vc$ and $K \not\subset Z$.  We know that $K$ must be contained in exactly one maximal cyclic subgroup $\gamma H_\alpha \gamma^{-1}$, where $\gamma \in \Gamma_3$ and $\alpha \in A$.  By definition, \linebreak $E^K=\{p \in Cyl(f,g) \vert K \subset (\Gamma_3)_p\}$.  Since $K \subset \gamma H_\alpha \gamma^{-1}$, we can conclude from the above discussion that \[E^K=W_{\alpha}^{\phantom{\alpha}K}.\]  Now let $C_\alpha$ be a complete set of coset representatives of $N(H_\alpha)$ in $\Gamma_3$.  From \S 2 we know that \[E^K \cong \{(\gamma_i,a) \in C_\alpha \times \tilde{W}_\alpha \vert K \subset \gamma_i H_\alpha \gamma_i^{-1}\}.\]  But $K\subset \gamma_i H_\alpha \gamma_i^{-1}$ iff $\gamma H_\alpha \gamma^{-1} \subset \gamma_i H_\alpha \gamma_i^{-1}$ iff $\gamma_i \in \gamma N(H_\alpha)$.  Therefore if $\gamma_i \in \gamma N(H_\alpha)$, then $E^K \cong \{\gamma_i\} \times \tilde{W}_\alpha$, which is contractible.\\      

Case B: Suppose $K \in vc \smallsetminus \{1\}$ and $K \subset Z$.  Then $E^K=\{p \in V \vert K \subset (\Gamma_3)_p=Z\}=V$, which is contractible.\\

Case C: Suppose $K=\{1\}$.  Then $E^K=E$.  We must show that $E=Cyl(f,g)$ is contractible. Since $\tilde{U}_\alpha$ and $\tilde{W}_\alpha$ are both contractible, $g_\alpha$ is a homotopy equivalence for every $\alpha \in A$.  Therefore, so is $G_\alpha$ and thus so is $g$.  Hence, as stated in \S 2, $Cyl(f,g)$ deformation retracts to $V$, which is contractible.\\

Case D: Lastly, suppose $K \notin vc$.  Then $K$ is not cyclic, which implies $K \not\subset (\Gamma_3)_p$ for any $p \in E$, so $E^K=\emptyset$.\\

Thus, we have proven that $E=Cyl(f,g)$ is a $vc$-universal $\Gamma_3$-space.                     
\end{proof} 

\section[Section 5]{Proof that $dim(B_{vc}\Gamma_3) \ge 3$.}

\begin{lemma} There does not exist any model for $B_{vc}\Gamma_3$ of dimension less than three.
\end{lemma}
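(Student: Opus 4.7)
The plan is to transfer the lower bound already known for $\mathbb{Z}^2$ up to $\Gamma_3$, exploiting the fact that $\Gamma_3$ contains $\mathbb{Z}^2$ as a subgroup. More precisely, the Connolly--Fehrman--Hartglass theorem \cite{Hartglass} states that every model for $B_{vc}(\mathbb{Z}^2)$ has dimension at least $vcd(\mathbb{Z}^2)+1=3$, so it suffices to show that any $vc$-universal $\Gamma_3$-space $X$, when restricted to a suitable $\mathbb{Z}^2$-subgroup, is itself $vc$-universal for that subgroup, while of course having the same dimension.

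First I would exhibit a rank-$2$ abelian subgroup $H\subseteq\Gamma_3$.  Any subgroup $H=\langle g,z\rangle$ generated by a non-central element $g\in\Gamma_3$ together with a generator $z$ of the center $Z$ is abelian (since $z$ is central) and free of rank $2$, hence isomorphic to $\mathbb{Z}^2$.  This $H$ is crystallographic with $vcd(H)=2$.

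Next I would verify that $X$, endowed with the $H$-action obtained by restriction, is $vc$-universal as an $H$-space.  The CW structure is unaffected by restricting the group action --- an orbit $\Gamma_3/K$ breaks up under $H$ into a disjoint union of $H$-orbits of the form $H/(H\cap\gamma K\gamma^{-1})$ --- so $\dim(X|_H)=\dim X$.  The fixed-point condition follows from a two-case analysis on subgroups $K\subseteq H$: either $K$ is cyclic, in which case $K$ is virtually cyclic in $\Gamma_3$ and $X^K$ is contractible by assumption; or $K$ has rank $2$, in which case $K\cong\mathbb{Z}^2$ is not virtually cyclic in $\Gamma_3$, so $X^K=\emptyset$.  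Since the non-cyclic subgroups of $H$ are precisely the rank-$2$ subgroups, this matches exactly what is required for $X|_H$ to be $vc$-universal as an $H$-space.

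Applying \cite{Hartglass} to $X|_H$ therefore yields $\dim X=\dim(X|_H)\ge 3$, and since the orbit space $B_{vc}\Gamma_3=X/\Gamma_3$ has the same dimension as $X$, this gives the desired lower bound.  The essential step --- and the only mildly subtle one --- is the verification that restriction preserves $vc$-universality, which as explained above comes down to the elementary classification of subgroups of $\mathbb{Z}^2$.
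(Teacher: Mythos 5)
Your argument is correct, but it takes a genuinely different route from the paper's. The paper works with its explicit model $E=Cyl(f,g)$: it locates a subcomplex $Cyl([f_\alpha],[g_\alpha])\cong S^1\ast S^1\cong S^3$ inside the three-dimensional orbit space $B=E/\Gamma_3$, concludes $H_3(B)\neq 0$, and then uses homotopy invariance (all models for $B_{vc}\Gamma_3$ are homotopy equivalent) to rule out models of dimension less than $3$. You instead restrict an arbitrary $vc$-universal $\Gamma_3$-CW-complex to a subgroup $H=\langle g,z\rangle$ with $g$ non-central and $z$ a generator of $Z$; your claim that $H\cong\Z^2$ is right, since $g^k\in Z$ forces $k=0$. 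The two key verifications are also sound: restriction of a $\Gamma_3$-CW-structure to $H$ gives an $H$-CW-structure of the same dimension (each orbit of cells decomposes into $H$-orbits), and the fixed-set conditions transfer because being virtually cyclic is an intrinsic property of a subgroup, while every subgroup of $\Z^2$ is either cyclic or free abelian of rank $2$, the latter never virtually cyclic. The trade-off between the approaches: yours needs nothing from the explicit construction of Section 3 and in fact proves the stronger statement that \emph{any} group containing $\Z^2$ admits no $vc$-universal space of dimension below $3$; on the other hand it imports the Connolly--Fehrman--Hartglass theorem \cite{Hartglass} as a black box, and one should note that their bound $\dim B_{vc}\Gamma\ge vcd(\Gamma)+1$ is to be read for crystallographic groups that are not themselves virtually cyclic (it fails for $\Z$), which is satisfied here since $\Z^2$ has rank $2$. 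The paper's homological argument is self-contained modulo its model and exhibits the explicit $3$-cycle responsible for the obstruction.
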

\begin{proof}
Let $B$ be the classifying space $E / \Gamma_3$. We will compute the three-dimensional homology group of $B$ to be nontrivial. Since all possible models for $B_{vc} \Gamma_3$ are homotopy equivalent, any model for $B_{vc} \Gamma_3$ must be a complex of dimension 3 or greater.

We will use $[\phi]$ to denote a map of orbit spaces induced from an equivariant map $\phi$.  The classifying space $B$ is homeomorphic to $Cyl([f],[g])$, which contains $Cyl([F_{\alpha}],[G_{\alpha}])$ as a subcomplex.

The orbit spaces $U_\alpha /\Gamma_3$ and $W_\alpha /\Gamma_3$ are homeomorphic to $\tilde{U}_\alpha /N(H_\alpha)$ and $\tilde{W}_\alpha /N(H_\alpha)$ respectively.  Hence, $Cyl([F_{\alpha}],[G_{\alpha}]) \cong Cyl([\iota_\alpha \circ f_{\alpha}],[g_{\alpha}])$, which in turn contains $Cyl([f_\alpha],[g_\alpha])$ as a subcomplex.

Since $f_\alpha$ and $g_\alpha$ are induced from the first and second coordinate projection homomorphisms of $Z \times H_\alpha$ respectively, the maps $[f_\alpha]$ and $[g_\alpha]$ are the first and second coordinate projections of $S^1 \times S^1$.  Thus $Cyl([f_\alpha],[g_\alpha])\equiv S^1 \ast S^1 \equiv S^3$.  Therefore $B$ contains $S^3$ as a subcomplex. Since $dim(E)=3$, we conclude $H_3(B) \neq 0$.
\end{proof}

\section[Section 6 (Conclusion)]{Conclusion.}
In this paper, we have constructed models for $E_{vc}\Gamma_3$, and thus $B_{vc}\Gamma_3$, of dimension $vcd(\Gamma_3)=3$.  It is known that for other groups $\Gamma$, such as crystallographic groups, $B_{vc}\Gamma$ must have dimension greater than $vcd(\Gamma)$.  It would be interesting to uncover which properties of $\Gamma_3$ on which these results depend.

\section[Section 7 (Acknowledgements)]{Acknowledgements.}  
We would like to thank Frank Connolly for motivation and insights.  Also, we thank all those at the University of Notre Dame who have facilitated our research, including Margaret Doig. 
\pagebreak

\bibliographystyle{plain}
\bibliography{myrefs}
\end{document}